\def\semicolon{\nobreak\mskip2mu\mathpunct{}\nonscript\mkern-\thinmuskip{;}
\mskip6muplus1mu\relax} 
\newcommand{\e}{\mathrm{e}}
\renewcommand{\d}{\mathrm{d}}
\newcommand{\id}{id}
\theoremstyle{plain}
  \tikzset{%
    baseline=-2.3pt,
    text height=1.5ex, text depth=0.25ex,
    >=stealth,
    node distance=2cm,
    mid/.style={fill=white,inner sep=2.5pt},
  }
\newtheorem{theo}{Theorem}[section]
\newtheorem{prop}[theo]{Proposition}
\newtheorem{coro}[theo]{Corollary}
\newtheorem{lemm}[theo]{Lemma}
\theoremstyle{definition}
\newtheorem{defi}[theo]{Definition}
\theoremstyle{remark}
\newtheorem{rema}[theo]{Remark}
\DeclareMathOperator{\dv}{\mathbf{div}}
\DeclareMathOperator{\mon}{\mathbf{mon}}
\DeclareMathOperator{\Tot}{Tot}
\begin{document}

\thispagestyle{plain}

\title{Self-Duality for  Landau--Ginzburg models}
\author{B.~Callander, E.~Gasparim, R.~Jenkins, L.~M.~Silva}


\maketitle


\begin{abstract}
  P.~Clarke describes mirror symmetry as a duality between Landau--Ginzburg models, so that the dual of an LG model is another LG model. 
  We describe examples in which the underlying space is a total space of a vector bundle on the projective line, and we show that self-duality occurs in precisely two cases: the cotangent bundle and the resolved conifold.
\end{abstract}

\label{first}

\section{Introduction}
For us a Landau--Ginzburg  model (LG)  is  a variety
 $X$ together with a regular function $W\colon X \rightarrow \mathbb C$ called the superpotential.
Clarke \cite{clarke} showed that one can state a generalised version of the Homological Mirror Symmetry conjecture of Kontsevich \cite{Ko}
as a duality between LG models. He also showed that this 
correspondence generalises those of Batyrev--Borisov, 
Berglung--H\"ubsch, Givental, and Hori--Vafa. 

This paper is an exercise in understanding the details of this correspondence. We summarise the construction in \cite{clarke}, 
which, for a given LG model $(X,W)$, produces a dual $(X^\vee, W^\vee)$. When $(X^\vee, W^\vee)\cong(X,W)$, we call 
$X$ self-dual. We
then study the case when $X$ is the total space of a vector bundle on $\mathbb P^1$ and 
prove that self-duality occurs in only two cases: $X=\Tot(\mathcal O(-2))$ and $X=\Tot(\mathcal O(-1)\oplus \mathcal O(-1))$.

\section{The Character to Divisor Map}

Let $X$ be a toric variety of rank $n$ with a torus embedding $\iota\colon T\longrightarrow X$. The torus $T=(\mathbb C^*)^n$ is an algebraic group, whose algebraic functions are characters, that is, group morphisms, $\chi\colon T\longrightarrow \mathbb C^*$. Let $M$ denote the group of characters of $T$, and $N$ the group of one-parameter subgroups, naturally identified with the dual of $M$, $\mathrm{Hom}_\mathbb Z(M,\mathbb Z)$. Let $M_\mathbb R$ and $N_\mathbb R$ denote the tensor products $M\otimes_\mathbb Z \mathbb R$ and $N \otimes_\mathbb R \mathbb Z$, respectively.

Since $\iota(T)$ is dense inside $X$, each character $\chi\in M$ can be thought of as a rational map, $f_\chi \colon X \dashrightarrow \mathbb C$, which is nowhere zero on $\iota(T)$. 
Let $R=\{D_1,\ldots,D_r\}$ denote the set of irreducible components of $X\setminus \iota(T)$. These are prime $T$-invariant Weil divisors and can be read off the moment polytope for $X$. Since each $D\in R$ is irreducible and $X$ is normal,
one can compute the order of vanishing, $\textrm{ord}_D(f_\chi)$, of $f_\chi$ along $D$. This defines a map,
\[
\dv(X)\colon M \longrightarrow \mathbb Z^R;\qquad \chi \mapsto \left( \textrm{ord}_{D_1}(f_\chi), \ldots, \textrm{ord}_{D_r}(f_\chi)\right).
\] Choosing ordered generators for $M$ and an ordering of $R$ gives a matrix $M_{\dv}(X) \in \mathrm{Mat}_{n\times r}(\mathbb Z)$. For each $D_k\in R$, let $v_k\in N$ be a generator for the corresponding ray in the fan. By \cite[Section 3.3]{Ful93}, $\textrm{ord}_{D_k}(f_\chi)=\langle \chi, v_k\rangle$. This implies that, when the bases of $N$ and $M$ are dual, the rows of the matrix $M_{\dv} (X)$ are simply the generating vectors, $v_k$.

The cokernel of $\dv(X)$ is the \textbf{Chow group} of $X$, written $A_{n-1}(X)$. When $X$ is a complete toric variety, the Chow group can be identified with the second integral cohomology $H^2(X,\mathbb Z)$ and is torsion free. The following lemma is from \cite{clarke}.

\begin{lemm}
  \label{thm:splitBundle}\cite[Cor. 4.5]{clarke}
  If $D_1, \dotsc, D_c$ are $T$-invariant Cartier divisors and $X$ is the  total space
  of the split  bundle $\mathcal O_Y (-D_1) \oplus \dotsb \oplus \mathcal
  O_Y (-D_c)$ over a toric variety $Y$, then the character group of $X$ decomposes as
  \[
    M_X \cong M_Y \oplus \mathbb Z\sigma_1 \oplus \dotsb \oplus \mathbb Z\sigma_c,
  \]
  where $\sigma_j$ is a rational section of $\mathcal O_Y (D_j)$ whose divisor
  is $D_j$, interpreted here as a character of $T$.  The $T$-invariant Weil divisors of $X$ are the preimages under
  $p$ of the $T$-invariant Weil divisors of $Y$ as well as the total spaces
  $X_j$ of the $c$ subbundles $E_j^\vee$, where $E_j^\vee$ is the dual bundle
  to $\ker (\pi_j \colon E \to \mathcal O (D_j))$.  Furthermore,
  \[
    \dv_X
    =
    \begin{pmatrix}
     \quad \dv_Y & \vert\quad D_1 \quad\vert\quad \dotsb \quad\vert\quad D_c \quad \phantom{.}\\
     \quad 0 & \id
    \end{pmatrix}.
  \]
  with respect to the decomposition of $M_X$ above and 
  \[
    \mathbb Z^{R_X} 
    = 
    \mathbb Z^{R_Y} \oplus \mathbb Z X_1 \oplus \dotsb \oplus \mathbb Z X_c.
  \]
\end{lemm}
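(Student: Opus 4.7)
The plan is to exploit the toric structure of the total space $X$. Since $Y$ is toric with torus $T_Y$ and each summand $\mathcal O_Y(-D_j)$ is $T_Y$-equivariant, $X$ inherits an action of $T_X := T_Y \times (\mathbb C^*)^c$, where the second factor scales the fiber coordinates $z_1, \dots, z_c$; this realises $X$ as a toric variety with open torus $T_X$. At the level of character groups this immediately yields the direct-sum decomposition $M_X = M_Y \oplus \mathbb Z\sigma_1 \oplus \dots \oplus \mathbb Z\sigma_c$, with $\sigma_j$ the character dual to the $j$-th fiber scaling. Under the standard toric dictionary, this $\sigma_j$ corresponds exactly to a rational section of $\mathcal O_Y(D_j)$ whose divisor is $D_j$.

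Next I would appeal to the standard description of the fan of a split toric bundle in $N_X = N_Y \oplus \mathbb Z^c$. Expanding $D_j = \sum_k a_{kj} D_k^Y$ in terms of the $T_Y$-invariant prime divisors of $Y$, the rays of $X$ come in two families: the lifts $\widetilde v_k = (v_k;\, *, \dots, *)$ of each ray $v_k$ of $Y$, whose last $c$ coordinates are determined (up to sign) by the Cartier data of the $D_j$, together with the new rays $(0;\, e_j)$ for $j = 1, \dots, c$. The first family corresponds to the preimage divisors $p^{-1}(D_k^Y)$, and the second to $X_j := \{z_j = 0\}$, which is visibly the total space of the complementary summand $\bigoplus_{i\neq j} \mathcal O_Y(-D_i)$, i.e.\ the dual of $\ker \pi_j$. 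Together this gives the decomposition $\mathbb Z^{R_X} = \mathbb Z^{R_Y} \oplus \mathbb Z X_1 \oplus \dots \oplus \mathbb Z X_c$ asserted in the lemma.

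The matrix $\dv_X$ is then a direct block computation via $\textrm{ord}_D(f_\chi) = \langle \chi, v_D \rangle$. A character $\chi_Y \in M_Y$ paired with $\widetilde v_k$ picks out $\langle \chi_Y, v_k \rangle$, reproducing $\dv_Y$ in the upper-left block, while pairing with $(0;e_j)$ gives $0$, producing the zero block below it. A new character $\sigma_j$ paired with $\widetilde v_k$ extracts the $j$-th vertical coordinate, which is $a_{kj}$, so that the $j$-th upper-right column records the divisor $D_j$ as an element of $\mathbb Z^{R_Y}$; and $\langle \sigma_j, (0;e_i) \rangle = \delta_{ij}$ gives the identity in the lower-right.

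The main obstacle I anticipate is the sign book-keeping in the fan description: one must choose $\sigma_j$ to be a section of $\mathcal O_Y(D_j)$ rather than $\mathcal O_Y(-D_j)$, and consistently fix the sign in the lifts $\widetilde v_k$, so that the Cartier coefficients of $D_j$ (and not $-D_j$) land in the upper-right block. Once this convention is pinned down, the remaining verification is routine linear algebra.
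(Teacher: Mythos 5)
Your argument is correct, but note that the paper itself offers no proof to compare against: this lemma is imported verbatim from Clarke (Cor.~4.5 of \cite{clarke}) and is stated without proof. What you have written is the standard toric-geometry derivation, and it checks out against the paper's own use of the lemma: for instance, for $X=\Tot(\mathcal O_{\mathbb P^1}(-k))$ with $D=k[\infty]$ your lifted rays $(v_k\semicolon a_{k1},\dots,a_{kc})$ reproduce exactly the rows $(1,0)$, $(-1,k)$, $(0,1)$ of $M_{\dv}(X)$ in Proposition~\ref{prop:Z_k}, and your identification of $X_j=\{z_j=0\}$ with $\Tot\bigl(\bigoplus_{i\neq j}\mathcal O_Y(-D_i)\bigr)$ agrees with the description of $E_j^\vee$ as the dual of $\ker\pi_j$. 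Two small points worth making explicit if you write this up in full: (i) the Cartier hypothesis on the $D_j$ is precisely what guarantees that each maximal cone of the fan of $Y$ lifts to a genuine cone in $N_Y\oplus\mathbb Z^c$ (the local data $m_\sigma$ with $\langle m_\sigma,v_k\rangle=-a_{kj}$ for $v_k\in\sigma$ makes the assignment $v_k\mapsto a_{kj}$ linear on each cone), so the fan of $X$ is well defined; and (ii) the fibre coordinate $z_j$ on $\mathcal O_Y(-D_j)$ is a linear functional on the fibres, hence a rational section of the dual bundle $\mathcal O_Y(D_j)$, which is the clean way to pin down the sign convention you flag and to see that the divisor of $\sigma_j$ on $Y$ is $D_j$ rather than $-D_j$.
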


\section{The Infinitesimal Action on Monomials}

Let $E$ be a vector bundle on a K\"ahler manifold $Y$ with a global section $w\in H^0(Y,E)$. Assume that $X=\mathrm{Tot}(E^\vee)$ is a toric variety.
A \textbf{superpotential} $W \colon X \rightarrow \mathbb C $  is a regular function on $X$. It can be determined by $w$ as follows. In the category of coherent $\mathcal{O}_Y$-modules, there are isomorphisms
\[
H^0(Y,E)\cong \mathrm{Hom}(\mathcal{O}_Y,E) \cong \mathrm{Hom}(E^\vee,\mathcal{O}_Y).
\] Thus, $w$ determines a morphism from $E^\vee$ to $\mathcal{O}_Y$,
  or, equivalently, a regular function $W$ on the total space of $E^\vee$. Since $T$ acts freely on the embedded torus $\iota(T)\subset X$, the zeroes of the function $W$ must lie on the locus of $T$-invariant divisors. Thus, $W\circ\iota\colon T\rightarrow \mathbb C^*$ is a homomorphism of algebraic groups, which may be expressed as a finite linear sum of characters of $T$:
\[
\iota^*W=\sum_{i=1}^s a_i \xi_i,
\] for scalars $a_i\in \mathbb C$ and characters $\xi_i\in M$. Set $\Xi :=  \{\xi_1,\ldots,\xi_s\}$.

The scalars $\{a_1,\ldots,a_s\}$  depend on the initial choice of embedding $\iota$. In turn, the map $\iota$ is determined by a point $x\in X$; namely, the image of $1\in T$. Write $\iota_x$ for 
the map sending $1$ to $x$. If $x'=tx$ is another point in $\iota(T)$ for some $t\in T$, we have
\[
\iota_{x'}^*W=\sum_{i=1}^s a_i \xi_i(t) \xi_i.
\] 
Let $(\mathbb C^*)^\Xi$ denote the space of all $\mathbb C^*$-linear sums of monomials in $\Xi$ -- these are regular functions on $T$. Now $T$ acts on $(\mathbb C^*)^\Xi$ as above; that is, if $\iota_x^*W\in (\mathbb C^*)^\Xi$ and $t\in T$, then $t\cdot \iota_{x}^*W :=  \iota_{t\cdot x}^*W$. In order to eliminate the dependence of $\iota^*W$ on the choice of embedding, we consider $\iota^*W$ as an element of the quotient $(\mathbb C^*)^\Xi/T$.  The kernel of the exponential map $\mathbb C^n\longrightarrow T; \quad (t_1,\ldots,t_n)\mapsto (\e^{t_1},\ldots,\e^{t_n})$ is isomorphic to $\mathbb Z^n$, as is the lattice of one-parameter subgroups $N$.
Let $\mathbb Z^\Xi$ denote the kernel of the corresponding exponential map on $\mathbb C^\Xi$. The action of $T$ on $(\mathbb C^*)^\Xi$ gives a map $f\colon T\longrightarrow (\mathbb C^*)^\Xi; \quad t\mapsto t\cdot (\xi_1+\cdots+\xi_s)$. Restricting the  derivative  $\d f\colon \mathbb C^r\longrightarrow \mathbb C^\Xi$  to the kernel $N$ of $\e^{(-)}$ yields a map which we denote by  
$$\mon\colon N \longrightarrow \mathbb Z^\Xi\text{.}$$  Hence, the maps $f$, $\d f$, and $\mon$ define a morphism of the following short exact sequences.
\[
\xymatrix{
0 \ar[r] & N      \ar[r] \ar[d]^\mon  &  \mathbb C^n \ar[r]^{\e^{(-)}} \ar[d]^{\d f} & T\ar[d]^f \ar[r] &0 \\
0 \ar[r] & \mathbb Z^\Xi \ar[r]							&  \mathbb C^\Xi \ar[r]^{\e^{(-)}} & (\mathbb C^*)^\Xi  \ar[r] &0
} 
\] 
Choosing  an  ordered basis for $N$ and  an  ordering of the monomials in $\Xi$  allows us  to express the map $\mon$ as a matrix   $M_{\mon}(X)\in \mathrm{Mat}_{n\times s}(\mathbb Z)$ such that  the $k^\mathrm{th}$ row of this matrix is given by the n-tuple $(b_1,\ldots,b_n)$ defined by the equation $\xi_k(t_1,\ldots,t_n)=t_1^{b_1}\cdots t_n^{b_n}$.

\section{Toric LG Models}

A \textbf{toric  Landau--Ginzburg model} is a triple, $(X,W,K)$, where $X$ is a
toric variety, $W$ is a regular function on $X$ and $K\in
A_{n-1}(X)\otimes_{\mathbb Z}\mathbb C/\mathbb Z$ is an element of the Chow group (with $\mathbb C
/ \mathbb Z$ coefficients).  To such a  model we have associated linear maps
$\dv(X)$ and $\mon(X)$. Choosing an element $L\in
\mathrm{coker}(\mon)\otimes_{\mathbb Z}\mathbb C/\mathbb Z$ determines the \textbf{linear data} associated
to $(X,W,K)$; namely, the pairs $(\dv, K)$ and $(\mon, L)$. We now provide an
inverse to this construction.

First we specify the conditions on $\mathbb R$-linear data $(C,c)$ for it to yield
an appropriate toric variety.  Let $C \colon M \to \mathbb Z^r$ be a linear map, and $c \in \mathbb Z^r$. 
 We
say that the $\mathbb R$-linear data $(C,c)$ is \textbf{kopaseptic} if
\begin{enumerate}
  \item \label{itm:polyNonEmpty} the polyhedral set $P = \{ \xi \in M \semicolon C\xi + c \ge 0 \}$ associated to $(C,c)$
    has non-empty interior; and
  \item \label{itm:stdGens} there exists a surjection $k \colon \mathbb Z^r \to \mathbb
    Z^{R_{X (C,c)}}$ sending standard generators either to standard generators
    or to zero such that the following diagram commutes
    \[
      \begin{tikzpicture}[node distance=1cm]
        \node (M) {$M$};
        \node (Zr) [right=of M] {$\mathbb Z^r$};
        \node (ZR) [below=of Zr] {$\mathbb Z^{R_{X(C,c)}}$};
        \draw[->] (M) to node [auto] {$C$} (Zr);
        \draw[->] (M) to node [auto,swap] {$\dv_{X(C,c)}$} (ZR);
        \draw[->] (Zr) to node [auto] {$k$} (ZR);
      \end{tikzpicture}
   \text{,} \]
   where  $R_{X (C,c)}$  denotes the number of torus-invariant divisors of the toric variety $X (C,c)$.
   
\end{enumerate}
Condition \ref{itm:polyNonEmpty} guarantees that the toric variety
$X (C,c)$ corresponding to the polyhedral set of $(C,c)$ is well-defined, and thus allows us to make
sense of condition \ref{itm:stdGens}.  Some of the inequalities $C\xi + c \ge
0$ defining the polyhedral set may be redundant and condition \ref{itm:stdGens} tells us how to remove
these redundances.  In fact, $k$ is almost uniquely determined, the only
choice being which redundant condition to drop.

Now we need to determine precisely when a potential $W$ (defined on a toric
variety $X$) is regular. 
Since it is regular if and only if all its monomials  are regular, and the
$\mon$ matrix encodes all the information about those monomials, we can state
our condition in terms of that matrix.  Indeed, a monomial $\xi$ is regular if and
only if $\dv \xi \ge 0$, which implies the following lemma.

\begin{lemm}  $W$ is regular if and only if
$\dv \circ \mon^T \ge 0$.
\end{lemm}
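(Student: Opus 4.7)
The plan is to reduce the statement to the single-monomial criterion recalled just before the lemma (``a monomial $\xi$ is regular iff $\dv \xi \ge 0$''), and then identify the componentwise inequality with an entrywise inequality for the composite matrix $\dv \circ \mon^T$. First I would observe that, since the $\xi_i$ are characters of $T$, they are pairwise linearly independent as rational functions on $X$. The key toric fact I would invoke is that on every $T$-invariant affine chart $U_\sigma$ of $X$ the ring of regular functions is $\mathbb C[\sigma^\vee\cap M]$, a $\mathbb C$-vector space whose basis is precisely the set of characters lying in $\sigma^\vee\cap M$. Consequently a finite $\mathbb C$-linear combination $\sum_{i}a_i\xi_i$ with all $a_i\neq 0$ is regular on $U_\sigma$ if and only if each $\xi_i$ individually lies in $\mathbb C[\sigma^\vee\cap M]$; no cancellation of poles between distinct characters can occur. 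Covering $X$ by its maximal cones, I would conclude that $W=\iota^*W=\sum a_i\xi_i$ is regular on $X$ if and only if every $\xi_i\in\Xi$ is regular on $X$, i.e., if and only if $\dv(\xi_i)\ge 0$ in $\mathbb Z^R$ for each $i$.

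The next step is the matrix translation. By the formula of Fulton recalled in Section~2, $\mathrm{ord}_{D_k}(f_{\xi_i})=\langle \xi_i,v_k\rangle$, where $v_k$ is the primitive generator of the ray corresponding to $D_k$. By construction the rows of $M_{\dv}(X)$ are the $v_k$ and the rows of $M_{\mon}(X)$ are the exponent vectors $(b_1,\ldots,b_n)$ of the $\xi_i$; so the $(k,i)$-entry of the product $M_{\dv}(X)\cdot M_{\mon}(X)^T$ equals
\[
\sum_{j=1}^{n}(v_k)_j\,(\xi_i)_j=\langle \xi_i,v_k\rangle=\mathrm{ord}_{D_k}(f_{\xi_i}).
\]
Combining with the previous paragraph, $W$ is regular iff all these entries are $\ge 0$, which is exactly the condition $\dv\circ\mon^T\ge 0$.

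The only subtle point, and hence the main obstacle, is the no-cancellation assertion: that the regularity of a sum of characters on $X$ forces the regularity of each summand. I would justify this by working on each affine chart separately and invoking the structure of $\mathbb C[\sigma^\vee\cap M]$ as a $\mathbb C$-vector space with the characters as basis; everything else reduces to unpacking the definitions of the matrices $M_{\dv}(X)$ and $M_{\mon}(X)$ introduced earlier in the paper.
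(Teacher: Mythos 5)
Your proposal is correct and follows the same route the paper takes: the paper's entire justification is the remark preceding the lemma (regularity of $W$ reduces to regularity of each monomial, and a monomial $\xi$ is regular iff $\dv\xi\ge 0$), and you have simply fleshed this out. The two details you add --- the no-cancellation argument via the character basis of $\mathbb C[\sigma^\vee\cap M]$ on each affine chart, and the identification of the entries of $M_{\dv}M_{\mon}^T$ with $\langle\xi_i,v_k\rangle=\mathrm{ord}_{D_k}(f_{\xi_i})$ --- are exactly the steps the paper leaves implicit, and both are correctly handled.
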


We now combine the above remarks into one definition. Let $A$ and $B$ be homomorphisms of free  abelian groups of finite rank such
that the domains of $A$ and $B$ have the same rank, and let $K$ and $L$ be
elements in $\mathrm{coker}(A)\otimes_\mathbb Z \mathbb C/\mathbb Z$ and
$\mathrm{coker}(B)\otimes_\mathbb Z \mathbb C/\mathbb Z$, respectively. A pair $(A,K)$ and
$(B,L)$ is called \textbf{$\mathbb C/\mathbb Z$-linear data}. Such data is said to be
\textbf{kopaseptic} if
\begin{enumerate}
  \item $(A, (\Im{K}))$ is kopaseptic; and
  \item the entries of the matrix $A \circ B^T$ are all non-negative.
\end{enumerate}
Here $\Im{K}$  denotes  the imaginary part of $K$.

Given kopaseptic $\mathbb C/\mathbb Z$-linear data $(A,K)$, $(B,L)$, we can define the
corresponding toric  Landau--Ginzburg model $(X,W,K)$ given by
\begin{enumerate}
  \item the toric variety $X := X(A,\Im{K})$ determined by $A$ and
      $\Im{K}$;
  \item the regular function $W := W (B,L)$ determined by $B$ and $L$.
\end{enumerate}
The element $K$ specifies a choice of complexified Kähler class for our
 Landau--Ginzburg model.

\section{Self-duality}

Let $(X,W,K)$ be a toric  Landau--Ginzburg model with linear data $(\dv(X),K)$, $(\mon,L)$. Then the \textbf{  dual}  $(X^\vee, W^\vee, K^\vee)$ of $(X,W,K)$ is the toric  Landau--Ginzburg model corresponding to the linear data obtained exchanging $(\dv, K)$ and $(\mon, L)$.

\begin{lemm}
Let $(X,W,K)$ and $(Y,W',K')$ be toric  Landau--Ginzburg models. Then $(X\times Y,W+W',K+K')$ is a toric  Landau--Ginzburg model and $\dv(X\times Y)=\dv(X)\oplus \dv(Y)$ and $\mon(X\times Y)=\mon(X)\oplus\mon(Y)$.
\end{lemm}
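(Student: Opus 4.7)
The plan is to verify each of the three assertions in turn, using the product toric structure on $X\times Y$ throughout.

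First, I would recall that the product of toric varieties is again toric, with torus $T_{X\times Y}=T_X\times T_Y$, character lattice $M_{X\times Y}\cong M_X\oplus M_Y$, cocharacter lattice $N_{X\times Y}\cong N_X\oplus N_Y$, and fan equal to the product fan $\Sigma_X\times\Sigma_Y$. The rays of this product fan are exactly the images of rays of $\Sigma_X$ and $\Sigma_Y$ under the two summand inclusions into $N_X\oplus N_Y$. Consequently, the $T$-invariant prime divisors of $X\times Y$ are the pullbacks $D_i\times Y$ and $X\times D'_j$, so $R_{X\times Y}=R_X\sqcup R_Y$.

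For the identity $\dv(X\times Y)=\dv(X)\oplus\dv(Y)$, I would apply the formula $\mathrm{ord}_{D_k}(f_\chi)=\langle\chi,v_k\rangle$ recalled in Section 2. Since each ray generator $v_k$ of $\Sigma_X\times\Sigma_Y$ lies in one of the two summands of $N_{X\times Y}$, the pairing of a character $\chi=(\chi_X,\chi_Y)\in M_X\oplus M_Y$ with $v_k$ only sees the corresponding component. With concatenated ordered bases and the natural ordering of $R_{X\times Y}$, the matrix $M_{\dv}(X\times Y)$ is therefore block-diagonal with blocks $M_{\dv}(X)$ and $M_{\dv}(Y)$.

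For $\mon$, the pullback $\iota_{X\times Y}^{\ast}(W+W')$ equals $\iota_X^{\ast}W+\iota_Y^{\ast}W'$, where the characters in $\Xi_X$ extend to $T_{X\times Y}$ as characters that are trivial on $T_Y$, and symmetrically for $\Xi_Y$. This identifies $\Xi_{X\times Y}$ with $\Xi_X\sqcup\Xi_Y$ sitting inside $M_{X\times Y}$. The action of $T_{X\times Y}$ on $(\mathbb C^{\ast})^{\Xi_{X\times Y}}$ then splits as a product, so the map $f$ and its derivative $\d f$ decompose as direct sums. Restricting to $N_{X\times Y}$ yields $\mon(X\times Y)=\mon(X)\oplus\mon(Y)$.

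Finally, to conclude that $(X\times Y,W+W',K+K')$ is a toric Landau--Ginzburg model, I would verify that the combined data is kopaseptic: the polyhedron associated to $(\dv(X\times Y),\Im(K+K'))$ is $P_X\times P_Y$ and hence has nonempty interior; the required surjection $k$ is $k_X\oplus k_Y$; and the product $\dv(X\times Y)\circ\mon(X\times Y)^T$ is block-diagonal with entrywise nonnegative blocks. The main (but mild) obstacle is bookkeeping: keeping the identifications $A_{n-1}(X)\oplus A_{m-1}(Y)\hookrightarrow A_{n+m-1}(X\times Y)$ and $R_{X\times Y}=R_X\sqcup R_Y$ consistent, so that $K+K'$ is unambiguously interpreted and the ``block-diagonal'' statements hold literally rather than merely up to a permutation of rows and columns.
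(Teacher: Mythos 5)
Your proof is correct and follows the same route as the paper, which simply states that the lemma ``follows directly from the definitions'' given that the torus action on $X\times Y$ agrees with the factor actions; you have merely spelled out the definitional unwinding (product fan, concatenated divisors and monomials, block-diagonal matrices) that the paper leaves implicit.
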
 
\begin{proof} This follows directly from the definitions, given that the torus action on $X\times Y$ agrees with the original actions on $X$ and $Y$.\end{proof}
This immediately implies the following.
\begin{coro} Suppose $(X,W,K)$ is a toric  Landau--Ginzburg model which is dual to $(X^\vee,W',K')$. Then $(X\times X^\vee, W+W', K+K')$ is self-dual.
\end{coro}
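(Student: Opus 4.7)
The plan is to apply the lemma just proved to compute the linear data of $(X \times X^\vee, W + W', K+K')$ and then read off the dual from the definition of duality, showing that the swap of factors realises the required isomorphism.

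First I would invoke the preceding lemma to obtain
\[
  \dv(X \times X^\vee) = \dv(X) \oplus \dv(X^\vee), \qquad
  \mon(X \times X^\vee) = \mon(X) \oplus \mon(X^\vee).
\]
By hypothesis $(X^\vee, W', K')$ is the dual of $(X, W, K)$, so unpacking the definition of dual linear data from the previous section gives identifications $\dv(X^\vee) = \mon(X)$ and $\mon(X^\vee) = \dv(X)$, together with $K' = L$ and $L' = K$ for the corresponding cokernel data. Substituting these into the display above yields
\[
  \dv(X\times X^\vee) = \dv(X) \oplus \mon(X), \qquad
  \mon(X\times X^\vee) = \mon(X) \oplus \dv(X).
\]

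Next I would observe that the canonical swap of summands $(a,b)\mapsto (b,a)$ furnishes an isomorphism between these two direct sums that simultaneously exchanges $\dv(X\times X^\vee)$ with $\mon(X\times X^\vee)$. On the Kähler side, $K + K' = K + L$ and its dual cokernel element is $L + L' = L + K$, which is the same element under the same swap. Thus exchanging $(\dv, K+K')$ with $(\mon, L+L')$ in the linear data of $X \times X^\vee$ returns the same linear data, up to this swap isomorphism; by the uniqueness of the LG model associated to a piece of kopaseptic $\mathbb C/\mathbb Z$-linear data, this proves $(X\times X^\vee)^\vee \cong X \times X^\vee$ with $W^\vee = W + W'$ and $K^\vee = K + K'$.

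The only part that requires more than bookkeeping is checking that the kopaseptic conditions are preserved under direct sum, so that the construction of the previous section really applies to the product data; this amounts to noting that the polyhedral set for $(A\oplus A', \Im K \oplus \Im K')$ is the product of the two factor polyhedra and hence has non-empty interior, the surjection $k$ can be taken as the direct sum of the individual surjections, and $(A\oplus A')\circ (B\oplus B')^T = (A\circ B^T)\oplus(A'\circ B'^T)$ is entry-wise non-negative. This is the only real content to verify; everything else is a formal manipulation of the duality swap.
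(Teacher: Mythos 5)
Your proposal is correct and is exactly the argument the paper has in mind: the paper gives no explicit proof, stating only that the corollary ``immediately'' follows from the lemma on products, and your write-up is precisely that unpacking --- the direct-sum decomposition of $\dv$ and $\mon$, the identifications $\dv(X^\vee)=\mon(X)$, $\mon(X^\vee)=\dv(X)$, $K'=L$, $L'=K$, and the swap-of-summands isomorphism. Your extra check that the kopaseptic conditions pass to direct sums is a worthwhile detail the paper omits entirely.
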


\subsection{The CY Condition}

There are several inequivalent definitions of a Calabi--Yau manifold. Some authors require that the manifold be a compact complex K\"ahler manifold with a Ricci flat metric, while others use a stronger condition that implies the former: a compact complex K\"ahler manifold with trivial canonical bundle. When a K\"ahler manifold is non-compact, the triviality of the canonical bundle does not necessarily  imply the  existence of a complete Ricci flat  metric. 
In this case we make the following definition.

\begin{defi} A complex K\"ahler manifold is \textbf{Calabi--Yau} if it has trivial canonical bundle and admits a complete Ricci-flat metric. Such a metric is called a \textbf{Calabi--Yau metric}.
\end{defi}

The dual of a Calabi--Yau variety is expected to also be Calabi--Yau. 
\section{Self-duality for Bundles on $\mathbb P^1$ }

We now describe  such dualities for the case when our  variety $X$ is the total space of a vector bundle on $\mathbb P^1$.

\subsection{Rank 1}

Let 
$X=\Tot(\mathcal{O}_{\mathbb P^1}(-k))$. For $k<0$, $E$ has no global sections, so assume $k\ge 0$. The chart $U :=  \{[z:1] \semicolon z\in \mathbb C\}$ of $\mathbb P^1$ determines a chart of $X$ on which points may be described as pairs $(z,u)$, where $u$ is the coordinate for the fibre of $E^\vee|_U$. The point $x=(1,1)$ determines the embedding $\iota_x$, so that an element $(t_1,t_2)\in T$ acts on $X$ by $(t_1,t_2)\cdot (z,u)=(t_1z,t_2u)$. Having embedded the torus this way, Laurent polynomials in $t_1$ and $t_2$ can be interpreted both  as
characters of the torus $T$ and 
as rational  functions on $X$. This gives a basis for the group of characters $M=\langle t_1,t_2\rangle$. Let $\nu_1,\nu_2$ be the dual basis for the one-parameter subgroups $N$. The $T$-invariant divisors of $X$ are $f_0=\{t_1=0\}$, $f_\infty = \{t_1=\infty\}$ and $\ell=\{t_2=0\}$. The moment polytope for $X$ is given by connecting the vertices $(0,1)$-$(0,0)$-$(1,0)$-$(k+1,1)$.
Fig.~\ref{fig:Z2polytope} illustrates the case  $k=2$.

\begin{figure}[ht]
  \centering
  \includegraphics[scale=1.3]{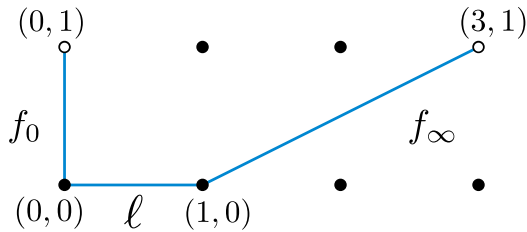}
  \caption{The moment polytope of $\Tot(\mathcal{O}(-2))$ with invariant divisors $\ell$, $f_0$, and $f_\infty$}
  \label{fig:Z2polytope}
\end{figure}

\begin{rema} The unique value of $k$ for which $X$ is Calabi--Yau is $k=2$. \end{rema}

\begin{prop}\label{prop:Z_k} The toric variety $X=\Tot(\mathcal{O}_{\mathbb P^1}(-k))$ belongs to a   self-dual  Landau--Ginzburg model $(X,W,K)$ if and only if $k=2$. 
\end{prop}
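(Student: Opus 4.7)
The plan is to extract the rigid combinatorial constraint that self-duality imposes, and observe that it is met by exactly one value of $k$.

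First I would write down the two matrices of linear data. Applying the splitBundle lemma to $Y=\mathbb{P}^1$ with the single summand $\mathcal{O}(-k)=\mathcal{O}(-k\cdot[\infty])$ yields
\[
  \dv_X \;=\; \begin{pmatrix} 1 & 0 \\ -1 & k \\ 0 & 1 \end{pmatrix},
\]
so that the primitive rays of the fan of $X$ in $N_X$ are $v_{f_0}=(1,0)$, $v_{f_\infty}=(-1,k)$ and $v_\ell=(0,1)$. A superpotential built from a global section $w=\sum_{i=0}^{k}a_i z^i$ of $E=\mathcal{O}(k)$ restricts on the embedded torus to $\sum a_i t_1^i t_2$, so every monomial of $W$ is of the form $t_1^i t_2$ with $0\le i\le k$. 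Consequently every row of $\mon_X$ lies on the affine line $\{y=1\}\subset M_X$.

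The heart of the ``only if'' direction is the following rigidity. A self-duality $(X,W,K)\cong(X^\vee,W^\vee,K^\vee)$ includes an isomorphism of toric varieties $X\cong X^\vee$, i.e.\ a lattice isomorphism $\phi\colon N_X\to N_{X^\vee}=M_X$ carrying the fan of $X$ to that of $X^\vee$. Under $\phi$ the three rays of $X$ go bijectively to three active rays of $X^\vee$, and these lie on the line $\{y=1\}$ by the previous paragraph. Since $\phi$ is invertible and linear, $\phi^{-1}(\{y=1\})$ is an affine line in $N_X$, so $(1,0)$, $(0,1)$, $(-1,k)$ must be collinear. The unique line through $(1,0)$ and $(0,1)$ is $x+y=1$, and $(-1,k)$ lies on it exactly when $k=2$.

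For the converse I would exhibit the self-duality explicitly. With $\Xi=\{(0,1),(1,1),(2,1)\}$, i.e.\ $W=u+zu+z^{2}u$, the matrix $\phi=\begin{pmatrix}0&1\\1&1\end{pmatrix}\in\mathrm{GL}_2(\mathbb{Z})$ sends $(1,0),(0,1),(-1,2)$ bijectively to $(0,1),(1,1),(2,1)$ and matches the two maximal cones of $X$'s fan with those of $X^\vee$, so $X\cong X^\vee$ as toric varieties. The dual superpotential $W^\vee$ has monomials given by the rows of $\dv_X$, namely $t_1$, $t_1^{-1}t_2^2$, $t_2$; applying $\phi^T$ to these exponent vectors recovers $u$, $z^2u$, $zu$, so $W^\vee\circ\phi=W$ up to the torus action on coefficients. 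A compatible pair $(K,L)$ in the two rank-one cokernels is then easy to arrange via the induced iso on Chow groups. The remaining subtlety I anticipate is checking the kopaseptic condition on the dual side, i.e.\ choosing $\Im L$ so that all three rows of $\mon_X$ remain facet-active in $P^\vee$; this is a one-parameter condition that is satisfied by a generic choice.
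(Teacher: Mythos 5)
Your proof is correct and follows essentially the same route as the paper's: both reduce self-duality to the existence of a $\mathrm{GL}_2(\mathbb Z)$ identification matching the three rows of $M_{\dv}(X)$ with three monomial exponents of $W$, and both then exhibit the $k=2$ case explicitly together with a suitable $K$. Your collinearity observation --- the exponents all lie on the affine line $y=1$, so $(1,0)$, $(0,1)$, $(-1,k)$ must be affinely collinear, forcing $k=2$ --- is just a cleaner geometric packaging of the paper's elimination of $\lambda,\mu$ from the column relations $\lambda(a,b,c)+\mu(1,1,1)=(1,-1,0)$ and $\lambda'(a,b,c)+\mu'(1,1,1)=(0,k,1)$.
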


\begin{proof}
With respect to the fixed basis above, the rows of the $\dv$-matrix are given by the  vectors normal to the
edges of the moment polytope,  which are $(1,0)$, $(0,1)$, and $(-1,k)$. Hence
\[
  M_{\dv}(X)
  =
  \begin{pmatrix}
    1 & 0 \\
    -1 & k \\
    0 & 1 
  \end{pmatrix}.
\]
A global section $w$ of $E$ is represented by a polynomial of degree $k$ (we assume  $k\ge 0$). Identifying 
$\mathbb P^1$
with the subvariety of $X$ cut out by
 $t_2=0$ gives a superpotential $W= a_0 t_2 + a_1 t_1t_2 + \cdots + a_k t_1^kt_2$ for some $a_0,\ldots,a_k\in \mathbb C$. 
 For $X$ to belong to a self-dual toric  Landau--Ginzburg model,
there must exist a choice of basis for $N$ and an ordering of $\Xi$ such that $M_{\dv}(X)=M_{\mon}(X)$. Clearly, $\Xi$ must have cardinality three, so $\Xi$ is a subset of three of the monomials in $\{t_2, \ldots, t_1^kt_2\}$. With the dual basis for $M$, the $\mon$-matrix for $X$ is given by
\[
  M_{\mon}(X)=
  \begin{pmatrix}
    a   & 1\\
    b & 1\\
    c & 1
  \end{pmatrix},
\] where $a,b,c$ are distinct integers in $\{0,\ldots,k\}$. If a choice of basis for $N$ exists such that $M_{\mon}(X)=M_{\dv}(X)$, then there are (non-zero) integers $\lambda,\mu\in \mathbb Z$ such that $\lambda(a,b,c) + \mu(1,1,1)= (1,-1,0)$. This implies $a+b-2c=0$. Likewise, there exist (non-zero) integers $\lambda',\mu'\in \mathbb Z$ such that $\lambda'(a,b,c)+\mu(1,1,1)= (0,k,1)$. This implies $(k-1)a+b-kc=0$. Together these two equations give $(k-2)(a-c)=0$, which, since $a$ and $c$ are distinct, implies that $k=2$.

It remains to show that, for $k=2$, an element $K\in A_{n-1}\otimes_\mathbb Z \mathbb C/\mathbb Z$ can be chosen so that $(\dv,\mathfrak {Im}(K))$ is kopaseptic. The Chow group in this case is isomorphic to $\mathbb Z$ by an isomorphism sending the generator $(1,1,-2)$ in the codomain of $M_{\dv}(X)$ to $1\in \mathbb Z$. The polyhedral set defined by choosing $t>0\in A_{n-1}$ has non-empty interior and produces inward normals that determine the fan for $X$.  On the other hand, for $t\le 0$, the relation from the third row of $M_{\dv}(X)$ is made redundant. It follows that lifting $(1,1,-2)$ to $\mathbb C/\mathbb Z$ gives a $K$ such that $(X,W,K)$ is   self-dual.
\end{proof}

\subsection{Rank Two Bundles}

Now we consider the rank 2 bundles on $\mathbb P^1$ whose total space is Calabi--Yau, so  $E=\mathcal O(-k) \oplus \mathcal O(k+2)$ on $Y=\mathbb P^1$. Let $X=W_k := \Tot \left(E^\vee\right)$. Note that $W_k \simeq W_{-k-2}$, so we can assume  $k\ge -1$.
 
\begin{prop}
  \label{prop:W_k} 
  The toric variety $X=W_k$ belongs to a   self-dual toric  Landau--Ginzburg model $(X,W,K)$ if and only if $k=0,-1$.
\end{prop}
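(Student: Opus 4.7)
The plan is to mirror the proof of Proposition \ref{prop:Z_k}: compute $M_{\dv}(W_k)$ and enumerate the possible $M_{\mon}(W_k)$, then determine when a change of basis of $N$ together with orderings of $R$ and $\Xi$ makes the two matrices equal. Applying Lemma \ref{thm:splitBundle} to $E^\vee=\mathcal O_{\mathbb P^1}(k)\oplus \mathcal O_{\mathbb P^1}(-k-2)$ with $T$-invariant representatives $D_1=-k[\infty]$ and $D_2=(k+2)[\infty]$, combined with $M_{\dv}(\mathbb P^1)=\binom{\phantom{-}1}{-1}$, yields
\[
  M_{\dv}(W_k)=\begin{pmatrix} 1 & 0 & 0 \\ -1 & -k & k+2 \\ 0 & 1 & 0 \\ 0 & 0 & 1 \end{pmatrix}.
\]
A section $w\in H^0(E)$ of $E=\mathcal O(-k)\oplus \mathcal O(k+2)$ contributes monomials $t_1^i t_2$ for $0\le i\le -k$ (only when $k\le 0$) together with $t_1^j t_3$ for $0\le j\le k+2$. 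Since $M_{\dv}(W_k)$ has four rows, self-duality forces $|\Xi|=4$.

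The key obstruction rules out $k\ge 1$. In that range every available monomial is of the form $t_1^i t_3$, so every row of an admissible $M_{\mon}(W_k)$ has the shape $(i,0,1)$ and the four rows lie on a single $\mathbb Z$-affine line in $\mathbb Z^3$. Any $GL_3(\mathbb Z)$-change of basis preserves collinearity, but the rows $(1,0,0)$, $(0,1,0)$, $(0,0,1)$ of $M_{\dv}(W_k)$ are linearly independent, and a quick pair-by-pair check shows the fourth row $(-1,-k,k+2)$ is not collinear with any two of them when $k\ge 1$. Hence no alignment is possible.

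For $k=0$ we use a product decomposition: the splitting $E=\mathcal O\oplus \mathcal O(2)$ gives $W_0 \cong \Tot(\mathcal O_{\mathbb P^1}(-2))\times \mathbb C$ as a toric Landau--Ginzburg model. Since $\Tot(\mathcal O(-2))$ is self-dual by Proposition \ref{prop:Z_k} and $(\mathbb C,z)$ is trivially self-dual (with $\dv$ and $\mon$ both equal to $(1)$), the additivity of $\dv$ and $\mon$ under products from the preceding lemma yields self-duality of $W_0$. For $k=-1$ both summands of $E$ are $\mathcal O(1)$; taking $\Xi=\{t_2,t_1t_2,t_3,t_1t_3\}$ gives
\[
  M_{\mon}(W_{-1})=\begin{pmatrix} 0 & 1 & 0 \\ 1 & 1 & 0 \\ 0 & 0 & 1 \\ 1 & 0 & 1 \end{pmatrix},
\]
and the matrix
\[
  A=\begin{pmatrix} 0 & 1 & 0 \\ 1 & 1 & 0 \\ 0 & 0 & 1 \end{pmatrix}\in GL_3(\mathbb Z)
\]
sends the rows of $M_{\dv}(W_{-1})$ bijectively onto those of $M_{\mon}(W_{-1})$ (e.g., $(-1,1,1)\mapsto (1,0,1)$), providing the required basis change.

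It then remains to lift the generator of the Chow group $A_{n-1}(W_k)\cong \mathbb Z$ (the unique linear relation among the four rays) to $K\in A_{n-1}(W_k)\otimes \mathbb C/\mathbb Z$ with $\Im K>0$, exactly as at the end of the rank 1 proof, and to verify the regularity condition $\dv\circ \mon^T\ge 0$ by direct computation. The main conceptual content is the collinearity obstruction for $k\ge 1$; the positive cases reduce to a short explicit basis change or a product decomposition.
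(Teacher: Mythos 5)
Your proposal is correct, and for the positive cases it follows the paper's route: the same $M_{\dv}$ computation via Lemma~\ref{thm:splitBundle}, the reduction of $k=0$ to Proposition~\ref{prop:Z_k} through the product decomposition $W_0\cong\Tot(\mathcal O(-2))\times\mathbb C$, and an explicit basis change for $k=-1$ (your $A$ does carry the rows of $M_{\dv}(W_{-1})$ bijectively onto $\{(0,1,0),(1,1,0),(0,0,1),(1,0,1)\}$, matching the paper's choice $N=\langle s_1s_3,s_3,s_1s_2\rangle$ up to reordering). Where you genuinely diverge is the exclusion of $k\ge 1$: the paper asserts that both matrices decompose as a direct sum with the identity and cites Proposition~\ref{prop:Z_k}, which is somewhat terse --- for $k\ge 1$ the potential has no $t_2$-monomials at all, so the $t_2$-column of $M_{\mon}$ is identically zero and the claimed block decomposition of $\mon$ is not literal. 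Your collinearity obstruction (all four rows of any admissible $M_{\mon}$ lie on the affine line $\{(s,0,1)\}$, while the rows $(1,0,0),(0,1,0),(0,0,1)$ of $M_{\dv}$ are already non-collinear, and affine collinearity is preserved by any $GL_3(\mathbb Z)$ change of basis and row permutation) is cleaner and more self-contained; note that the non-collinearity of those three standard rows alone suffices, so your extra pair-by-pair check on $(-1,-k,k+2)$ is unnecessary. The one place you are lighter than the paper is the kopaseptic/K\"ahler-class verification, which you defer by analogy to the rank-one case; the paper spells out the generator $(1,1,-1,-1)$ of the relation lattice for $k=-1$ and checks that $t>0$ yields the correct fan, and it would be worth including that line rather than only gesturing at it.
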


\begin{proof}
As in the example above, the chart $U :=  \{[z:1] \semicolon z\in \mathbb C\}$ of $\mathbb P^1$ gives a chart on $X$ on which points may be described as triples, $(z,u,v)$, where $u$ is the coordinate along a fibre of $\mathcal{O}(k)|_U$ and $v$ is the coordinate along a fibre of $\mathcal{O}(-k-2)|_U$. Let $T=(\mathbb C^*)^3$ be embedded in $X$ so that $(t_1,t_2,t_3)\in T$ acts  by the rule $(t_1,t_2,t_3)\cdot (z,u,v)=(t_1z,t_2u,t_3v)$.
Again we let Laurent polynomials in $t_i$ represent both the characters of $T$ and the rational functions on $X$. With this notation, the $T$-invariant divisors are $f_0=\{t_1=0\}$,
$f_\infty = \{ t_1=\infty \}$, $l_1=\{t_2=0\}$ and $l_2=\{t_3=0\}$. Let $[\infty]$ denote the divisor of $\mathbb P^1$ which is the intersection of $\mathbb P^1$ with $f_\infty$ in $X$. Applying Lemma~\ref{thm:splitBundle} with $c=2$, $D_1 = -k[\infty]$, $D_2 = (k+2) [\infty]$, $\sigma_1= t_2$ and $\sigma_2=t_3$ gives the matrix
\[
  M_{\dv}(X)
  =
  \begin{pmatrix}
    1 & 0 & 0 \\
    -1 & -k & k+2 \\
    0 & 1 & 0 \\
    0 & 0 & 1
  \end{pmatrix}.
\]
The following three cases describe the global sections of $E= \mathcal{O}(-k)\oplus \mathcal{O}(2+k)$ on $\mathbb P^1$.
\[
H^0(\mathbb P^1, E) = \begin{cases}  H^0(\mathbb P^1, \mathcal{O}(1)\oplus \mathcal{O}(1) ) \cong \mathbb C[x]_1\oplus \mathbb C[x]_1 & \textrm{ when $k=-1$,} \\
															H^0(\mathbb P^1, \mathcal{O}(2)\oplus \mathcal{O} ) \cong \mathbb C[x]_2 \oplus \mathbb C  & \textrm{ when $k=0$,}\\
															H^0(\mathbb P^1, \mathcal{O}(k+2) ) \cong \mathbb C[x]_{k+2} & \textrm{ when $k\ge 1$.} \end{cases}
\] 

When $k\ge 0$ the $\dv$ and $\mon$ matrices decompose into the direct sum of the $\dv$ and $\mon$ matrices for $\Tot(\mathcal{O}(-k-2))$ with the identity matrix. 
That $X$ belongs to a   self-dual  Landau--Ginzburg model for $k=0$ but not for $k\ge 1$ follows from Proposition \ref{prop:Z_k}.

Consider $k=-1$. A generic section of $E$ is a pair of linear polynomials in a single variable. This produces the superpotential $W= a_0t_2 + a_1t_1t_2 + b_0t_3+b_1t_1t_3$ on $X$, where $a_0,a_1,b_0,b_1\in \mathbb C$. Judiciously order the monomials in $W$ so that $\Xi = \{t_1t_3,t_2,t_3,t_1t_2\}$. Let $s_1,s_2,s_3$ denote one-parameter subgroups dual to the characters $t_1,t_2,t_3$. Finally, choose the basis $N=\langle s_1s_3, s_3, s_1s_2\rangle$. With respect to these choices, $M_{\mon}(W_k)=M_{\dv}(W_k)$. 
The Chow group is isomorphic to $\mathbb Z$, which we identify with the subgroup $\{(t,t,-t,-t) \semicolon t\in \mathbb Z\}$ of the codomain of $M_{\dv}(X)$. Again, if $t<0$, then the relations from the third and forth rows of $M_{\dv}(X)$ are redundant, but $t>0$ produces a polytope with inward normals which define the fan for $X$. Choosing a lifting of $(1,1,-1,-1)$ yields the required Chow group element $K$.
\end{proof}

\subsection{Higher Rank Bundles}

We recall the following definition.

\begin{defi}
A vector bundle on a curve is \textbf{polystable} if it is isomorphic to a sum of stable bundles with the same slope.
\end{defi}

The following theorem of Hori is from \cite[Theorem 32.8.8]{Hori}.
\begin{theo}(Hori) A holomorphic vector bundle admits a Calabi--Yau metric if and only if it is polystable.
\end{theo}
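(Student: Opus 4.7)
The plan is to prove the two directions separately, exploiting the interplay between the Hermitian--Einstein condition on $E$ and Ricci-flatness of $\Tot(E^\vee)$.

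For the sufficiency direction, suppose $E \to Y$ is polystable. When $Y = \mathbb P^1$, Grothendieck's splitting theorem combined with polystability forces $E \cong L^{\oplus r}$ for a single line bundle $L$, and since every line bundle on $\mathbb P^1$ carries a curvature-constant Hermitian metric one obtains a Hermitian--Einstein metric $h$ on $E$ almost for free (on a higher genus curve this is where Narasimhan--Seshadri enters). I would then use the fibrewise unitary action induced by $h$ to write down the Calabi ansatz: seek a K\"ahler form of the shape $\pi^*\omega_Y + \d\d^c \phi(\rho)$ on $\Tot(E^\vee)$, with $\rho$ the fibrewise squared norm. The Ricci-flat Monge--Amp\`ere equation reduces to a single ODE in $\rho$ which can be integrated in closed form, and the CY condition $\det E \cong K_Y^{-1}$ together with the Hermitian--Einstein property is precisely what makes the solution extend to $\rho = \infty$ and yield a complete metric. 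This recovers the classical construction of Eguchi--Hanson on $T^*\mathbb P^1$ and of Candelas--de la Ossa on the resolved conifold.

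For the necessity direction, suppose $\Tot(E^\vee)$ admits a complete Ricci-flat K\"ahler metric $g$. The aim is to extract from $g$ a Hermitian--Einstein metric on $E$; once this is done, the Donaldson--Uhlenbeck--Yau theorem (or Narasimhan--Seshadri on a curve) forces $E$ to be polystable. Concretely, I would restrict $g$ to a tubular neighbourhood of the zero section and take its transverse Hessian along the fibres, producing a fibrewise Hermitian form on $E^\vee$. Ricci-flatness of $g$, combined with the triviality of the canonical bundle of $\Tot(E^\vee)$ (which already forces $\det E \cong K_Y^{-1}$), should compel the mean curvature of this form to be a scalar multiple of $\omega_Y$, which is precisely the Hermitian--Einstein equation.

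The chief obstacle is analytic control at infinity. The non-compactness of $\Tot(E^\vee)$ prevents one from directly citing Kobayashi--Hitchin, so the sufficiency direction needs Tian--Yau-type asymptotic estimates to guarantee that the ODE solution is genuinely complete, while the necessity direction needs enough regularity of $g$ at infinity to justify the fibrewise Hermitian--Einstein reduction rigorously. In the two cases $\mathcal O(-2)$ and $\mathcal O(-1)\oplus\mathcal O(-1)$ relevant to this paper both issues are settled by the known asymptotically conical geometry; the general statement rests on the Tian--Yau/Donaldson--Uhlenbeck--Yau analytic package.
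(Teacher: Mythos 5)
The first thing to say is that the paper does not prove this statement at all: it is quoted as a black box from the literature (Theorem 32.8.8 of the Mirror Symmetry monograph of Hori et al., which is essentially the Donaldson--Uhlenbeck--Yau correspondence), so there is no internal proof to compare your argument against. What you have written is an attempt to supply a proof of the cited result, and it should be judged on those terms.

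Your sufficiency direction is the standard and correct route: polystability gives a Hermitian--Einstein metric on $E$ (trivially on $\mathbb P^1$, where polystable means $\mathcal O(a)^{\oplus r}$; via Narasimhan--Seshadri or Donaldson--Uhlenbeck--Yau in general), and the Calabi ansatz then reduces the complete Ricci-flat Monge--Amp\`ere equation on $\Tot(E^\vee)$ to an ODE, recovering Eguchi--Hanson on $\Tot(\mathcal O(-2))$ and Candelas--de la Ossa on $\Tot(\mathcal O(-1)\oplus\mathcal O(-1))$. This is a genuine sketch of a known construction, though completeness of the resulting metric still has to be checked from the explicit solution rather than asserted.

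The necessity direction, however, contains a real gap. You propose to extract a fibrewise Hermitian metric on $E^\vee$ by taking the ``transverse Hessian'' of an arbitrary complete Ricci-flat K\"ahler metric $g$ near the zero section, and then to argue that Ricci-flatness of $g$ forces this induced metric to be Hermitian--Einstein. There is no reason for either step to work: a general complete Ricci-flat metric on the total space need not be compatible with the bundle projection in any way (it need not be invariant under the fibrewise $\mathbb C^*$-action, its restriction to a fibre direction need not define a Hermitian form varying holomorphically in the base, and the zero section need not even be distinguished by $g$). The known rigidity arguments of this type all impose either symmetry hypotheses (the metric arises from an ansatz respecting the fibration) or precise asymptotic cone conditions at infinity; without one of these, the reduction from ``Ricci-flat on $\Tot(E^\vee)$'' to ``Hermitian--Einstein on $E$'' simply does not follow, and this is exactly the implication that rules out bundles such as $\mathcal O(-3)\oplus\mathcal O(1)$, which is the content the paper actually needs from the theorem. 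As it stands, your argument proves only one direction.
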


\begin{theo} 
  \label{thm:sdP1}
  Let $X$ be the total space of a vector bundle on $\mathbb P^1$. Suppose, additionally, that such a bundle is Calabi--Yau. Then $X$ is self-dual if and only if   $X = \mathcal{O}(-2)$ or  $X =\mathcal{O}(-1)\oplus\mathcal{O}(-1)$.
\end{theo}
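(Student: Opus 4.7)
My plan is to reduce the statement to the two propositions already proved by eliminating higher-rank cases via Hori's theorem and the Calabi--Yau constraint.

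First, I invoke Grothendieck's splitting theorem to write $X = \Tot(F)$ with $F = \bigoplus_{i=1}^n \mathcal O(a_i)$ for some integers $a_i$ and some rank $n \geq 1$. Triviality of the canonical bundle of $X$ is equivalent to $\det F \cong K_{\mathbb P^1}$, that is, $\sum_{i=1}^n a_i = -2$. This is consistent with the conventions used in Propositions \ref{prop:Z_k} and \ref{prop:W_k}: for $n=1$ it forces $a_1 = -2$, and for $n=2$ it recovers the family $W_k = \Tot(\mathcal O(-k)\oplus \mathcal O(k+2))^\vee$ after reindexing.

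Next I apply Hori's theorem, which identifies the Calabi--Yau condition with polystability of $F$. On $\mathbb P^1$ every line bundle is stable, and a direct sum of line bundles is polystable if and only if all summands have the same slope. Since the rank equals $n$ and the degree equals $-2$, polystability forces $a_1 = \cdots = a_n = -2/n$. As the $a_i$ must be integers, this leaves only $n = 1$ (with $a_1 = -2$) and $n = 2$ (with $a_1 = a_2 = -1$); for every $n \geq 3$ no Calabi--Yau bundle exists, so those ranks contribute nothing to the classification.

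Finally I invoke the previous propositions on the two surviving cases. Proposition \ref{prop:Z_k} applied to $k=2$ shows that $\Tot(\mathcal O(-2))$ is self-dual, and Proposition \ref{prop:W_k} applied to $k=-1$ shows that $\Tot(\mathcal O(-1)\oplus\mathcal O(-1)) = W_{-1}$ is self-dual. Conversely, Proposition \ref{prop:Z_k} rules out every other rank-one example and Proposition \ref{prop:W_k} rules out $W_k$ for all $k \neq 0,-1$; the remaining rank-two possibility $k=0$, which corresponds to $\mathcal O \oplus \mathcal O(-2)$, has unequal slopes and so is excluded by Hori's theorem. This exhausts all possibilities and yields the two bundles claimed. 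The only real content is bookkeeping: the main thing to get right is matching the paper's convention $X = \Tot(E^\vee)$ when reading off the Calabi--Yau condition $\sum a_i = -2$, so that the polystability constraint $n \mid 2$ emerges cleanly.
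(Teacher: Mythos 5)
Your proof is correct and follows essentially the same route as the paper: Grothendieck splitting plus the degree condition $\sum a_i=-2$, Hori's polystability criterion to force all summands equal (hence $n\mid 2$), and Propositions~\ref{prop:Z_k} and~\ref{prop:W_k} for the surviving ranks. If anything you are slightly more explicit than the paper in noting that the $k=0$ case $\Tot(\mathcal O\oplus\mathcal O(-2))$, which Proposition~\ref{prop:W_k} would otherwise admit, is excluded by the polystability form of the Calabi--Yau hypothesis.
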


\begin{proof}
  The previous sections deal with the rank one and two cases. 
  The Grothendieck splitting lemma states that a rank $n$ bundle $E$ on $\mathbb P^1$ splits as a sum of line bundles $E\cong \mathcal{O}(a_1)\oplus\cdots\oplus\mathcal{O}(a_n)$. 
  The total space of $E$ has trivial canonical bundle if and only if $\sum a_i = -2$. 

  If $E$ is a sum of two line bundles, $\mathcal{O}(a)\oplus\mathcal{O}(b)$, with $a\ge b$, then the slope of $\mathcal{O}(a)$ is greater than or equal to the slope of $E$.  
  Induction on the rank $r$ of $E$, for $r \ge 2$, shows that vector bundles on $\mathbb P^1$ of rank $r \ge 2$ are not stable.  
  Thus, the only stable vector bundles on $\mathbb P^1$ are the line bundles. 
  It follows that a vector bundle on $\mathbb P^1$ is polystable if and only if it is of the form
  \[
    \mathcal{O}(a)\oplus \cdots \oplus \mathcal{O}(a)
  \] 
  for some $a$.
  Therefore, vector bundles on $\mathbb P^1$ with rank greater than two do not satisfy the Calabi--Yau condition required for self-duality. 
\end{proof}

\begin{rema}
  We expect that the hypothesis that the bundle is Calabi--Yau can be removed from this theorem.
\end{rema}

\begin{rema}
  The Calabi--Yau condition used in Theorem~\ref{thm:sdP1} is stronger than the commonly  used definition that only requires  triviality of the  canonical bundle.
  Using the latter, one can apply the algebraic argument from the proof of Proposition~\ref{prop:Z_k} to show that a Calabi--Yau vector bundle on $\mathbb P^1$ can also be a direct sum of $\mathcal O (-2)$ or $\mathcal O (-1)^{\oplus 2}$ with $\mathcal O^{\oplus k}$ for some $k \ge 0$.
  That the former, stronger condition removes the trivial summands gives a justification of its suitability. 
\end{rema}

\section*{Acknowledgements}

  We thank Patrick Clarke for patiently explaining details of his work, and Tony Pantev for enlightening discussions. 

  Elizabeth Gasparim was supported by Fapesp grant 2012/10179-5 and Rollo Jenkins was supported by Faepex-PRP-Unicamp under grant number PRP/FAEPEX 005/2014 and Fapesp grant 2013/17654-3.

Brian Callander \\
{\it E-mail address}: {\tt briancallander@gmail.com }\\[0.3cm]
Rollo Jenkins \\
{\it E-mail address}: {\tt rollojenkins@gmail.com }\\[0.3cm]
Elizabeth Gasparim \\
{\it E-mail address}: {\tt etgasparim@gmail.com }\\[0.3cm]
Lino  Marcos Silva \\
{\it E-mail address}: {\tt linofriend@gmail.com }\\[0.3cm]
Address of all authors:\\
IMECC \\
UNICAMP \\
Universidade Estadual de Campinas \\
Rua S\'{e}rgio Buarque de Holanda 651 \\ 
Cidade Universit\'{a}ria ``Zeferino Vaz'' \\
Bar\~{a}o Geraldo \\ 
Campinas \\
S\~{a}o Paulo \\
Brazil

\label{last}
\end{document}